\documentclass[leqno]{amsart}
\usepackage{amsmath}
\usepackage{amssymb}
\usepackage{amsthm}
\usepackage{enumerate}
\usepackage[mathscr]{eucal}
\theoremstyle{plain}
\usepackage{tikz}
\newtheorem{theorem}{Theorem}[section]

\theoremstyle{definition}
\newtheorem{definition}[theorem]{Definition}
\newtheorem{remark}[theorem]{Remark}

\newtheorem{example}[theorem]{Example}

\theoremstyle{remark}




\begin{document}

\title{On the numerical Index of polyhedral Banach Spaces}

\author{Debmalya Sain, Kallol Paul, Pintu Bhunia and Santanu Bag }

\address{(Sain) Department of Mathematics, Indian Institute of Science, Bengaluru 560012,
Karnataka, India}
\email{saindebmalya@gmail.com}

\address{(Paul)Department of Mathematics, Jadavpur University, Kolkata 700032, India}
\email{kalloldada@gmail.com}

\address{(Bhunia)Department of Mathematics, Jadavpur University, Kolkata 700032, India}
\email{pintubhunia5206@gmail.com}

\address{(Bag)Department of Mathematics, Vivekananda College For Women,Barisha , Kolkata 700008, India}
\email{santanumath84@gmail.com}

\thanks{The research of Dr. Debmalya Sain is sponsored by Dr. D. S. Kothari Post-doctoral Fellowship
under the mentorship of Professor Gadadhar Misra. Dr. Sain feels elated to acknowledge the
delightful company and great hospitality of Mr. Anirban Dey, a former Bengal Champion in Table
Tennis. Mr. Pintu Bhunia would like to thank UGC, Govt. of India for the financial support in the form of fellowship. The 
authors are grateful to Vladimir Kadets for his helpful suggestions.}

\subjclass[2010]{Primary 47A12, Secondary 46B20}
\keywords{polyhedral Banach spaces; numerical radius; numerical index; face of a polyhedron}



\date{}
\maketitle
\begin{abstract}
 The computation of the numerical index of a Banach space is an intriguing problem, even in case of two-dimensional real polyhedral Banach spaces. In this article we present a general method to estimate the numerical index of any finite-dimensional real polyhedral Banach space, by considering the action of only finitely many  functionals, on the unit sphere of the space.  We further obtain the exact numerical index of a family of $3$-dimensional polyhedral Banach spaces for the first time, in order to illustrate the applicability of our method.
\end{abstract}

\section{Introduction}
The purpose of the present article is to present a general method to study the numerical index of polyhedral Banach spaces. As a concrete application of our approach, we explicitly compute the numerical index of a family of $3$-dimensional polyhedral Banach spaces whose unit balls are obtained by gluing two pyramids at the opposite base faces of a $3$-dimensional right prism, that remained unexplored previously. Let us first establish the notations and the terminologies to be used in the present article.

\smallskip

Let $\mathbb{X}$ be an $n$-dimensional  Banach space. Throughout the article, without explicitly mentioning hereafter, we work with only real Banach spaces. Let $ B_{\mathbb{X}}= \{ x \in \mathbb{X}~:\|x\| \leq 1 \} $ and $  S_{\mathbb{X}}= \{ x \in \mathbb{X}~:\|x\| = 1 \} $ be the unit ball and the unit sphere of $ \mathbb{X} $ respectively. Given a convex set $A$ in $ \mathbb{X}, $  let $ext~A$ denote the set of extreme points of $ A. $ Let $L(\mathbb{X})$ denote the collection of all bounded linear operator from $\mathbb{X}$ to $ \mathbb{X} $ and let $\mathbb X^*$ denote the dual space of $\mathbb{X}.$ Given a bounded linear operator $ T \in L(\mathbb{X}), $ the numerical range $ V(T) $ of $ T $ is defined \cite{BFL, LG} as $ V(T)= \{ x^*(Tx) : x\in S_\mathbb X,x^* \in S_{\mathbb X^*},x^*(x)=1\}$. The numerical radius $ v(T) $ of $ T $ is defined as $ v(T)=\sup \{ | x^*(Tx) |~:x\in S_\mathbb X,x^* \in S_{\mathbb X^*},x^*(x)=1\}.$ It is easy to see that $ v(T)=\sup \{ | x^*(Tx) |~:x \in \textit{ext}~ B_\mathbb X, x^* \in \textit{ext}~B_{\mathbb X^*}, x^*(x)=1\}. $ Finally, the numerical index $ n(\mathbb{X}) $ of $ \mathbb{X} $ is defined as $ n(\mathbb{X})=\inf \{ v(T):T\in L(\mathbb{X}),~ \| T \| =1\}. $ The numerical index is an important geometric constant associated with a given Banach space. Its properties have been studied by several authors \cite{MJ, MJA, MR, MV} in the context of various Banach spaces. In \cite{MJ} the authors have explicitly computed the numerical index of certain polyhedral norms in the $2$-dimensional case. $ \mathbb{X} $ is said to be a polyhedral Banach space if $ B_{\mathbb{X}} $ has only finitely many extreme points. Equivalently, $ \mathbb{X} $ is a polyhedral Banach space if $ B_{\mathbb{X}} $ is a polyhedron. In this context, let us mention the following definitions which are relevant to our present work:
 
\begin{definition}
A polyhedron $P$ is a non-empty compact subset of $\mathbb{X}$ which is the intersection of finitely many closed half-spaces of $\mathbb{X}$, i.e., $P=\bigcap_{i=1}^r M_i,$ where $ M_i $ are closed half-spaces in $ \mathbb{X} $ and $r \in \mathbb{N}.$ The dimension $  \dim P $ of the polyhedron $P$ is defined as the dimension of the subspace generated by the differences $ v-w $ of vectors $ v,w \in P. $
\end{definition}

\begin{definition}
A polyhedron $ Q $ is said to be a face of the polyhedron $ P $ if either $ Q=P $ or if we can write $ Q = P \cap \delta M, $ where $ M $ is a closed half-space in $ \mathbb{X} $ containing $ P $ and $ \delta M $ denotes the boundary of $  M. $ If $ \dim Q = i, $ then $  Q $ is called an $i$-face of $ P. $ $ (n-1)$-faces of $ P $ are called facets of $ P $ and $1$-faces of $ P $ are called edges of $ P. $
\end{definition}

As mentioned in \cite{VMR}, explicit computation of the numerical index of Banach spaces is a difficult problem in general, that remains unsolved for many classical spaces. In this article we propose a simple approach towards studying the numerical index of polyhedral Banach spaces. We show that for polyhedral Banach spaces, it is possible to consider only finitely many distinguished functionals acting on the space, in order to have a reasonably good estimate of the numerical index of the concerned space. To serve our purpose, we introduce the following definition in the study of the numerical index of polyhedral Banach spaces:

\begin{definition}
Let $ \mathbb{X} $ be a finite-dimensional polyhedral Banach space. Let $ F $ be a facet of the unit ball $ B_{\mathbb{X}} $ of $  \mathbb{X}. $ A functional $ f \in S_{\mathbb{X}^{*}} $ is said to be a \emph{supporting functional corresponding to the facet $ F $ of the unit ball $ B_{\mathbb{X}} $} if the following two conditions are satisfied:\\
$ (1) $ $ f $ attains norm at some point $ v $ of $ F, $\\
$ (2) $ $ F =(v + \ker f)\cap S_{\mathbb{X}}. $
\end{definition}

It is easy to see that there is a unique hyperspace $H$ such that an affine hyperplane to $ H $  contains the facet $F$ of the unit ball $ B_{\mathbb{X}}.$ Moreover, there exists a unique norm one linear functional $ f, $ such that $ f $ attains norm on $ F $ and $ker f = H.$ In particular, $f$ is a supporting functional to $ B_{\mathbb{X}} $ at every point of $ F. $ It is immediate that there are only finitely many  facets for the unit ball of a polyhedral Banach space and accordingly there are only finitely many supporting functionals corresponding to the facets of the unit ball. We study the actions of these functionals on the unit sphere of the polyhedral Banach space in order to estimate the numerical index of the space. We prove that the supporting functionals corresponding to the facets of the unit ball of a polyhedral Banach space are the only extreme points of the unit ball of the dual space. In this connection, we require the notions of the polar of a set and the prepolar of a set in a Banach space. Let $ A \subset \mathbb{X} $ and let $ B \subset \mathbb{X}^{*}. $ Then the polar of $ A $ is defined as $A^{o}=\{f\in \mathbb{X}^{*}: |f(x)| \leq 1 ~~\forall x \in A\}$ and  the prepolar of $B$ is defined as $^oB=\{ x\in \mathbb{X} :~ | f(x) | \leq 1~~  \forall f\in B\}$. We refer the readers to the excellent recent book \cite{K}, for more information on this. Furthermore, we prove that in many important cases, it is possible to explicitly compute the exact numerical index of the concerned Banach space by this method. Indeed, we determine the numerical index of the family of $3$-dimensional polyhedral Banach spaces whose unit balls are oblique prisms with regular polyhedrons as its base. As another application of this novel method,  we compute the numerical index of the family of $3$-dimensional polyhedral Banach spaces whose unit balls are obtained by gluing two pyramids at the opposite base faces of a $3$-dimensional right prism.

\section{Main Results}

We begin this section by proving that the supporting functionals corresponding to the facets of the unit ball of a finite-dimensional polyhedral Banach space are the only extreme points in the unit ball of the dual space.

\begin{theorem}\label{theorem:extreme}
Let $\mathbb{X}$ be an $n$-dimensional polyhedral Banach space. Then $f \in S_{\mathbb{X}^{*}}$ is an extreme point of $ B_{\mathbb{X}^{*}} $ if and only if $ f $ is a supporting functional corresponding to a facet of $B_{\mathbb{X}}.$  
\end{theorem}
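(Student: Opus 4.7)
The plan is to exploit the standard polytope/polar duality, reframed in the paper's face language. The pivotal reformulation is that $f \in S_{\mathbb{X}^*}$ is extreme in $B_{\mathbb{X}^*}$ if and only if the set $E_f := \{v \in \operatorname{ext}(B_{\mathbb{X}}) : f(v)=1\}$ linearly spans $\mathbb{X}$. Once this is in hand, the theorem reduces to a dimension count.

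I would first prove the reformulation by an averaging/perturbation argument. If $f = \tfrac{1}{2}(g+h)$ with $g,h \in B_{\mathbb{X}^*}$, then for any $v \in E_f$ the equality $g(v)+h(v)=2$ combined with $g(v), h(v) \le 1$ forces $g(v)=h(v)=1=f(v)$; hence $g$, $h$, and $f$ agree on $\operatorname{span}(E_f)$, and if this span is all of $\mathbb{X}$ then $g=h=f$. Conversely, if $E_f$ does not span $\mathbb{X}$, choose a nonzero $\varphi \in \mathbb{X}^*$ vanishing on $\operatorname{span}(E_f)$; since $|f|$ is strictly less than $1$ on the finite set $\operatorname{ext}(B_{\mathbb{X}}) \setminus (E_f \cup -E_f)$ (with $-E_f$ denoting the set of extreme points where $f$ attains $-1$), a sufficiently small $\varepsilon \varphi$ can be added to or subtracted from $f$ without leaving $B_{\mathbb{X}^*}$, exhibiting $f$ as the midpoint of $f \pm \varepsilon \varphi$ and contradicting extremality.

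For the $(\Leftarrow)$ direction, if $f$ is a supporting functional to a facet $F$, then $E_f \supseteq \operatorname{ext}(F)$; since $F$ has dimension $n-1$ and lies in the affine hyperplane $\{f=1\}$ which avoids the origin, the linear span of $\operatorname{ext}(F)$ equals $\mathbb{X}$, so the reformulation gives extremality. For the $(\Rightarrow)$ direction, extremality yields $\operatorname{span}(E_f) = \mathbb{X}$, which forces the affine span of $E_f \subseteq \{f=1\}$ to be exactly $(n-1)$-dimensional. The set $F := \{x \in B_{\mathbb{X}} : f(x)=1\} = B_{\mathbb{X}} \cap \delta\{f \le 1\}$ is a face in the sense of the paper's definition; as a face of the polyhedron $B_{\mathbb{X}}$, its extreme points are precisely $E_f$, so $F = \operatorname{conv}(E_f)$ has dimension $n-1$ and is therefore a facet. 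Conditions $(1)$ and $(2)$ of the definition of a supporting functional for $F$ then follow from the identity $(v + \ker f) \cap S_{\mathbb{X}} = \{x \in B_{\mathbb{X}} : f(x)=1\} = F$, using that $f(x)=1$ together with $\|x\| \le 1$ forces $\|x\|=1$.

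The main technical point I anticipate is the dimension bookkeeping, namely translating between $E_f$ linearly spanning $\mathbb{X}$ and the face $\{f=1\} \cap B_{\mathbb{X}}$ being $(n-1)$-dimensional. This hinges crucially on the origin not lying in the affine hyperplane $\{f=1\}$, which ensures that linear and affine dimensions of subsets of this hyperplane differ by exactly one. Everything else is routine polyhedron/face manipulation.
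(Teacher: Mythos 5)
Your proof is correct, and while your argument for the ``if'' direction (supporting functional $\Rightarrow$ extreme) is essentially the paper's --- both rest on finding $n$ linearly independent norming vectors in the facet and averaging --- your argument for the ``only if'' direction is genuinely different. The paper proves necessity by a duality argument: it sets $B$ equal to the finite set of supporting functionals of facets, checks that $^oB=B_{\mathbb{X}}$ and $(^oB)^o=B_{\mathbb{X}^*}$, and invokes the Bipolar theorem together with the Milman-type fact that the extreme points of the convex balanced hull of a finite set lie in that set. You instead prove the intrinsic characterization that $f\in S_{\mathbb{X}^*}$ is extreme if and only if $E_f=\{v\in \operatorname{ext} B_{\mathbb{X}}: f(v)=1\}$ spans $\mathbb{X}$, via an explicit perturbation $f\pm\varepsilon\varphi$ with $\varphi$ vanishing on $\operatorname{span}(E_f)$ (using that $|f|$ is bounded away from $1$ on the finitely many remaining extreme points), and then identify the facet directly as $F=\{x\in B_{\mathbb{X}}: f(x)=1\}$ by the dimension count through the hyperplane $\{f=1\}$ not containing the origin. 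Your route is more elementary and self-contained --- no polars or bipolar theorem --- and it produces the facet explicitly rather than only showing that extreme points lie in the set $B$; the paper's route is shorter and leans on standard machinery. The one place to be careful in your write-up is the claim that $\operatorname{ext} F = E_f$: this needs the observation that $F$ is an exposed face, so that any convex decomposition of a point of $F$ within $B_{\mathbb{X}}$ stays in $F$; that is a one-line check with $f(a)+f(b)=2$ and $f(a),f(b)\le 1$, but it should be said.
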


\begin{proof}
First we prove the sufficient part of the theorem.  If possible, suppose that $ f $ is a supporting functional corresponding to a facet $F$ of $B_{\mathbb{X}}$  such that $f$ is not an extreme point of $ B_{\mathbb{X}^{*}}. $ Then $f$ can be written as $f=(1-t)f_1+tf_2$, for some $f_1, f_2 \in S_{\mathbb{X^*}} \setminus \{f\}$ and for some $t\in (0,1)$. Now, $f$ attains norm at some extreme point $v $ of $B_{\mathbb{X}}$ and so $ (v+\ker f) $ is a supporting hyperplane to $ B_{\mathbb{X}} $ at the point $ v. $ Now, $ F(\subsetneq (v+\ker f)) $ being a facet, contains $ n $ linearly independent vectors $ w_1, w_2, \ldots, w_n \in S_{\mathbb{X}} $. It is easy to see that for each $ i=1,2,\ldots,n, $ we have, $ f(w_i)=1. $ Therefore, we have, $ 1 = f(w_i) = (1-t)f_1(w_i)+tf_2(w_i). $ As $ \| f_1 \|=\| f_2 \|=\|w_i\| =1, $ it follows that $ f_1(w_i)= f_2(w_i)=f(w_i)=1.$ Since $ w_1,w_2,\ldots,w_n $ are linearly independent, we must have $ f_1=f_2=f. $ This contradiction completes the proof of the sufficient part of the theorem.

Now we prove the necessary part of the theorem. Consider the set $B=\{ f\in S_{\mathbb{X^*}} : ~~f $ is a supporting functional corresponding to a facet of $ B_{\mathbb{X}} \}$. Then, it is easy to see that $^oB=B_{\mathbb{X}}$ and $(^oB)^o=B_{X^*}.$ Now, by  the Bipolar theorem, we have that $(^oB)^o$ is the closed convex balanced hull of $B$. Therefore, every supporting functional can be represented as a convex combination of points of $B$. This shows that the extreme points of $ B_{\mathbb{X}^{*}} $ are contained in $B$.  This completes the proof of the necessary part of the theorem and establishes it completely.

\end{proof}

In view of the above Theorem \ref{theorem:extreme}, it is evident that supporting functional corresponding to the facets of the unit ball of a polyhedral Banach space play a very special role in determining the numerical index of the space. In the following theorem, we apply this idea to obtain a reasonably good estimate of the numerical index of any finite-dimensional polyhedral Banach space. 

\begin{theorem}\label{theorem:estimate}

Let $\mathbb{X}$ be an $n$-dimensional polyhedral Banach space. Let $ \pm v_1, \pm v_2,$ $\ldots, \pm v_m $ be the vertices of $ B_{\mathbb{X}}. $ For each $ i=1,2,\ldots,m, $ let $ F_{i1},F_{i2},\ldots,F_{in} $ be any $ n $ facets of $ B_{\mathbb{X}} $ meeting at $ v_i. $ Let $ f_{i1},f_{i2}, \ldots, f_{in} $ be  the $ n $ supporting functionals corresponding to the facets $ F_{i1},F_{i2},\ldots,F_{in} $ respectively. Let \[ \kappa_i = \min_{x \in S_{\mathbb{X}}} \max_{1 \leq r \leq n} {\| f_{ir}(x)\| }. \] 
Then each $\kappa_i > 0$ and the numerical index $ n(\mathbb{X}) $ of $ \mathbb{X} $ can be estimated as follows:
\[n(\mathbb{X}) \geq \min \{\kappa_1,\kappa_2,\ldots,\kappa_m\}.\] 
\end{theorem}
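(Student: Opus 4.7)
The plan is to combine Theorem \ref{theorem:extreme} (which identifies the extreme points of $B_{\mathbb{X}^{*}}$ with the supporting functionals corresponding to the facets of $B_{\mathbb{X}}$) with the standard observation that on a polyhedral unit ball, the operator norm of any $T \in L(\mathbb{X})$ is attained at a vertex. More precisely, I would fix an arbitrary $T$ with $\|T\|=1$ and, since $x \mapsto \|Tx\|$ is convex on the polytope $B_{\mathbb{X}}$, locate a vertex of $B_{\mathbb{X}}$ at which $\|T\|$ is attained. After replacing by the antipode if necessary, call this vertex $v_i$, so that $\|Tv_i\|=1$ and in particular $Tv_i \in S_{\mathbb{X}}$.

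Next I would bring in the data of the theorem: the $n$ facets $F_{i1},\ldots,F_{in}$ meeting at $v_i$ and their supporting functionals $f_{i1},\ldots,f_{in}$. Since $v_i \in F_{ir}$ and $f_{ir}$ attains its norm on $F_{ir}$, we have $f_{ir}(v_i)=1$ for every $r$. By Theorem \ref{theorem:extreme} every $f_{ir}$ lies in $\textit{ext}\, B_{\mathbb{X}^{*}}$, so the pair $(v_i,f_{ir})$ is admissible in the extreme-point formula for $v(T)$. Hence $v(T) \geq |f_{ir}(Tv_i)|$ for every $r$, and therefore
\[
v(T) \;\geq\; \max_{1 \leq r \leq n} |f_{ir}(Tv_i)| \;\geq\; \kappa_i \;\geq\; \min_{1 \leq j \leq m}\kappa_j,
\]
where the second inequality uses $Tv_i \in S_{\mathbb{X}}$ together with the defining infimum of $\kappa_i$. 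Taking the infimum over $T$ then produces the desired lower bound on $n(\mathbb{X})$.

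The remaining point is the positivity of each $\kappa_i$. For this I would show that $f_{i1},\ldots,f_{in}$ are linearly independent: the facet $F_{ir}$ is contained in the affine hyperplane $v_i + \ker f_{ir}$, and under the natural reading of ``$n$ facets meeting at $v_i$'' in an $n$-polytope one has $\bigcap_{r} (v_i+\ker f_{ir}) = \{v_i\}$, whence $\bigcap_{r} \ker f_{ir} = \{0\}$ and by a dimension count the $f_{ir}$'s are linearly independent. Consequently $x \mapsto \max_r |f_{ir}(x)|$ is a genuine norm on $\mathbb{X}$, continuous on the compact sphere $S_{\mathbb{X}}$, so its minimum $\kappa_i$ is strictly positive. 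The main obstacle I anticipate is precisely this geometric point: one must make sure the chosen $n$ facets at $v_i$ are in sufficiently ``general position'' that their intersection collapses to the single vertex; once this is secured, the rest of the argument is a direct application of Theorem \ref{theorem:extreme} and the extreme-point definition of the numerical radius.
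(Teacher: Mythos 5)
Your argument is essentially identical to the paper's proof: both locate a norm-attaining vertex $v_j$ of $T$, use the admissible pairs $(v_j,f_{jr})$ to get $v(T)\geq\max_r|f_{jr}(Tv_j)|\geq\kappa_j$, and derive $\kappa_i>0$ from $\bigcap_r\ker f_{ir}=\{0\}$ plus continuity and compactness of $S_{\mathbb{X}}$. The only differences are cosmetic: your appeal to Theorem \ref{theorem:extreme} is not actually needed (the pairs $(v_j,f_{jr})$ already satisfy $f_{jr}\in S_{\mathbb{X}^*}$, $f_{jr}(v_j)=1$, so they are admissible in the original supremum defining $v(T)$), and the ``general position'' point you rightly flag for $\bigcap_r\ker f_{ir}=\{0\}$ is exactly the step the paper dismisses as ``easy to observe.''
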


\begin{proof}
It is easy to observe that for each $i=1,2,\ldots,m,$ we have, $ {\bigcap}_{r=1}^n \ker f_{ir}  = \{0\}. $ Therefore, given any $ x \in S_{\mathbb{X},} $ we have, $ \max_{1 \leq r \leq n} \{ \vert f_{ir}(x)\vert \} > 0. $ Since $ x \longrightarrow \max_{1 \leq r \leq n}\{\vert f_{ir}(x)\vert \} $ is a continuous function from $S_{\mathbb{X}}$ to $\mathbb{R}$ and $ S_{\mathbb{X}} $ is a compact set, it follows that  each $\kappa_i > 0.$ \\
Let $ T \in S_{L(\mathbb{X})} $ be arbitrary. Since $ T $ must attain norm at an extreme point of $ B_{\mathbb{X}}, $ there exists $ v_{j} $ such that $ \|Tv_{j}\|=\| T \|=1 $ for some $j \in \{1,2,\ldots,m\}.$ Therefore, we have,
 $ v(T) = \sup\{ |x^*(Tx)|~:x^* \in \textit{ext}~ B_{\mathbb{X}^*},x \in \textit{ext}~ B_{\mathbb{X}},x^*(x)=1\} \geq \max_{1 \leq r \leq n} \{\vert f_{jr}(Tv_{j})\vert \} \geq \kappa_{j}. $
This completes the proof of the theorem.
\end{proof}

In the following example, we illustrate the applicability of Theorem \ref{theorem:estimate} towards estimating the numerical index of a $2$-dimensional polyhedral Banach space.

\begin{example}\label{theorem:2}
Let $\mathbb{X} = \mathbb{R}^{2}$ be a $2$-dimensional polyhedral Banach space such that $B_{\mathbb{X}}$ is an irregular hexagon with vertices $ \pm(1,1), \pm(\frac{1}{2},2), \pm(-1,1). $ Then $n(\mathbb{X})\geq \frac{5}{17}$.
\end{example}

\begin{proof}
Let us denote the vertices of $B_{\mathbb{X}}$ by $v_i$, $i=1,2,\ldots,6$, where, $v_1=(1,1), v_2=(\frac{1}{2},2), v_3=(-1,1), v_4=(-1,-1), v_5=(-\frac{1}{2},-2), v_6=(1,-1)$. The unit sphere $S_{\mathbb{X}}$ is shown in figure 1.
\begin{figure}[ht]
\centering 
\includegraphics[width=0.4\linewidth]{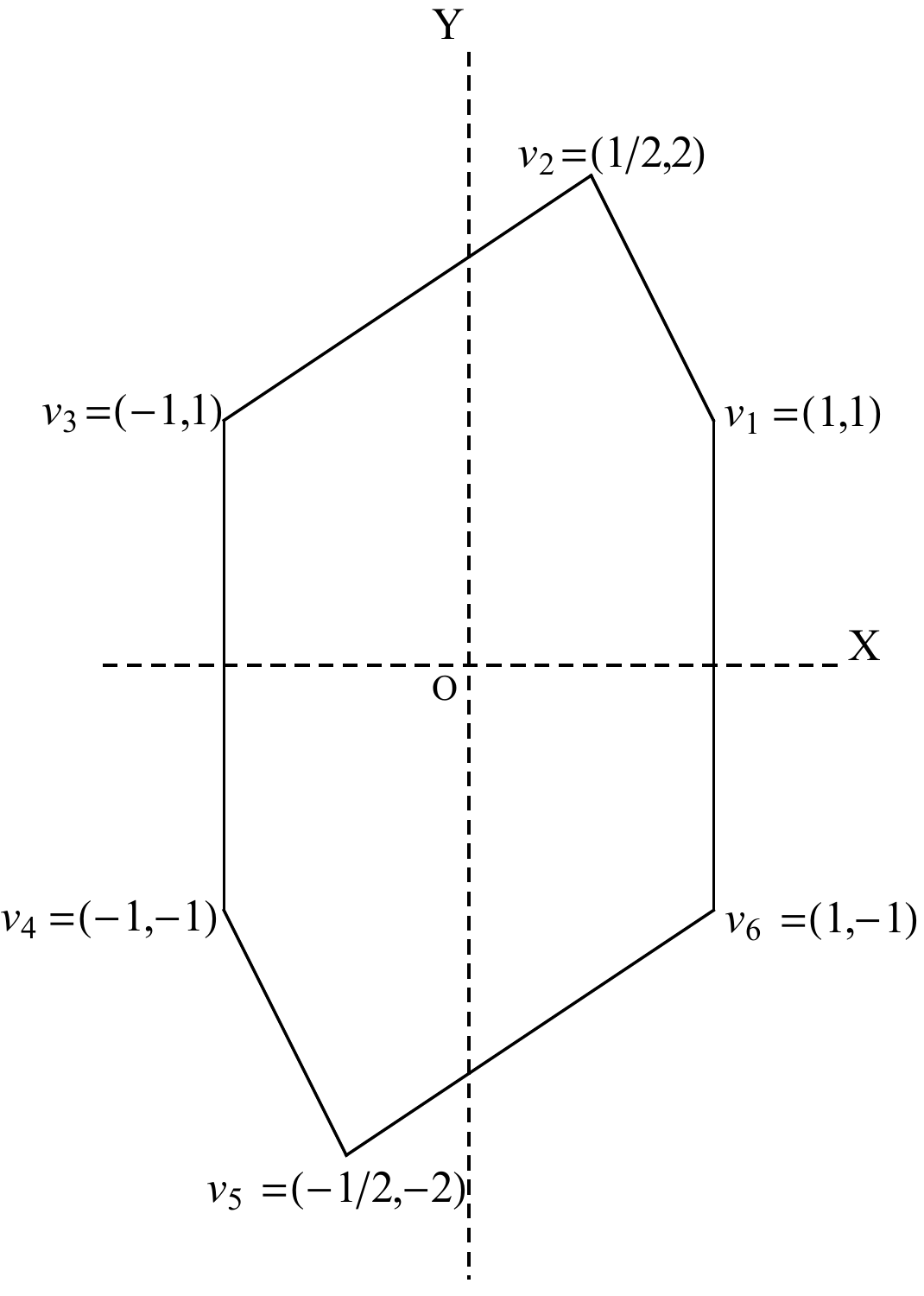}
\caption{}
\label{figure}
\end{figure}
Let $T \in S_{L(\mathbb{X})}$ be arbitrary. Clearly, $T$ attains norm at some $ v_i. $ For each $ i=1,2,\ldots,6, $ let $ f_{i1},f_{i2} $ be the two supporting functionals corresponding to the two edges of $ B_{\mathbb{X}} $ meeting at $ v_i. $ 
It is easy to check that $f_{11},f_{12}$ are given as : $f_{11}(\alpha,\beta)=\frac{2}{3}\alpha+\frac{1}{3}\beta$ and $f_{12}(\alpha,\beta)=\alpha$ $~\forall~(\alpha,\beta) \in \mathbb{X}.$ 
On direct computation, we obtain that $\max\{ \vert f_{11}(\alpha,\beta)\vert,\vert f_{12}(\alpha,\beta)\vert\}\geq \frac{5}{17} = \kappa_1~\forall~(\alpha,\beta)\in S_{\mathbb{X}}$.  
Similarly, it can be shown that $ \kappa_2 = \frac{4}{7} $ and $ \kappa_3 = \frac{9}{13} $. Therefore, using Theorem \ref{theorem:estimate}, we conclude that $ n(\mathbb{X}) \geq  \frac{5}{17}. $
\end{proof}

We would like to note that the basic idea behind estimating the numerical index of a finite-dimensional polyhedral Banach space by applying Theorem \ref{theorem:estimate}, is to consider only finitely many supporting functionals corresponding to the facets of the unit ball. However, to our pleasant surprise, this idea yields the exact value of the numerical index of some particular family of Banach spaces. Mart\'in and Mer\'i \cite{MJ} have computed the numerical index of some $2$-dimensional polyhedral Banach spaces. One of their main results is to compute the numerical index of those $2$-dimensional polyhedral Banach spaces for which the unit balls are regular polyhedrons. In this article, we generalize this result by applying the newly introduced concept of supporting functionals corresponding to the facets of the unit ball. Indeed, we compute the numerical index of the family of $3$-dimensional polyhedral Banach spaces whose unit balls are oblique prisms with regular polyhedrons as the bases. 

\begin{theorem}\label{theorem:odd radius}
Let $\mathbb{X}$  be a $3$-dimensional polyhedral Banach space such that $B_{\mathbb{X}}$ is a prism with vertices $(\cos(j-1)\frac{\pi}{n} \pm{l}, \sin(j-1)\frac{\pi}{n},\pm{1})$,  $j \in \{1,2,\ldots,2n\}$,  where $l \in \mathbb{R}$ and $n$ is odd.  Then $n(\mathbb{X}) = \sin\frac{\pi}{2n}$.

\end{theorem}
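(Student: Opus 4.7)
Set $\phi=\pi/(2n)$ and $\theta_j=(2j-1)\phi$. The plan is to sandwich $n(\mathbb{X})$ between the lower bound $\sin\phi$ furnished by Theorem~\ref{theorem:estimate} and a matching upper bound realized by an explicit operator. First I would enumerate the facets of $B_{\mathbb{X}}$ together with their supporting functionals: the top face at $z=1$ and the bottom face at $z=-1$ yield $f_{\mathrm{top}}(x,y,z)=z$ and $f_{\mathrm{bot}}(x,y,z)=-z$, while each of the $2n$ lateral parallelogram facets $F_j^{\mathrm{lat}}$ has supporting functional
\[
 f_j^{\mathrm{lat}}(x,y,z)=\sec\phi\bigl[\cos\theta_j(x-lz)+\sin\theta_j\,y\bigr].
\]
At the top vertex $v_j^+=(\cos 2(j-1)\phi+l,\sin 2(j-1)\phi,1)$ the three meeting facets are the top together with $F_{j-1}^{\mathrm{lat}}$ and $F_j^{\mathrm{lat}}$, and central symmetry handles the bottom vertices identically.

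For the lower bound, I would substitute $w=x-lz$ (which leaves the lateral functionals unchanged) and then rotate the $(w,y)$-plane by $-(j-1)\pi/n$, a rotational symmetry of the base $2n$-gon $P_0$. In the new coordinates $(u,v)$, the three supporting functionals at $v_j^+$ simplify to $z$, $u-\tan\phi\,v$, and $u+\tan\phi\,v$, whose joint maximum in absolute value is $\max(|z|,\,|u|+\tan\phi\,|v|)$. If $|z|\geq\sin\phi$ we are done; otherwise $|z|<1$ forces $(x-lz,y)\in\partial P_0$ and hence $(u,v)\in\partial P_0$, so the lower bound reduces to the planar fact $\min_{(u,v)\in\partial P_0}\bigl(|u|+\tan\phi\,|v|\bigr)=\sin\phi$. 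To prove this, observe that the rhombus $R_\phi=\{|u|+\tan\phi\,|v|\leq\sin\phi\}$ has vertices $(\pm\sin\phi,0)$ and $(0,\pm\cos\phi)$: the crucial input is that, because $n$ is odd, $\pi/2$ is a normal direction of $P_0$, so $(0,\pm\cos\phi)$ are midpoints of two opposite edges of $P_0$ and lie on $\partial P_0$. The other two vertices of $R_\phi$ lie strictly inside $P_0$ (using $\sin\phi<\cos\phi$), and $R_\phi$ is contained in the Euclidean disk of radius $\cos\phi$ inscribed in $P_0$, so $R_\phi\subseteq P_0$ touching $\partial P_0$ precisely at $(0,\pm\cos\phi)$. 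Theorem~\ref{theorem:estimate} then yields $n(\mathbb{X})\geq\sin(\pi/(2n))$.

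For the upper bound, I would exhibit $T(x,y,z)=\cos\phi\,(-y,\,x-lz,\,0)$. Convexity of the polygon norm reduces the computation of $\|T\|$ to checking at vertices $e_k$ of $P_0$: the image $\cos\phi\,(-\sin 2(k-1)\phi,\cos 2(k-1)\phi)$ has Euclidean norm $\cos\phi$ in direction $2(k-1)\phi+\pi/2$, again a normal direction of $P_0$ because $n$ is odd, hence polygon norm $1$, so $\|T\|=1$. For the numerical radius, the angle-subtraction formula gives
\[
 f_j^{\mathrm{lat}}(Tv_j^+)=\sin\bigl(\theta_j-2(j-1)\phi\bigr)=\sin\phi,\qquad f_{j-1}^{\mathrm{lat}}(Tv_j^+)=-\sin\phi,\qquad f_{\mathrm{top}}(Tv_j^+)=0,
\]
and the identity $Tv_j^-=Tv_j^+$ (since $T$ depends on $(x,y,z)$ only through $x-lz$ and $y$) shows the bottom vertices contribute the same values. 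Hence $v(T)=\sin\phi$, completing the match.

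The essential difficulty is the inscribed-rhombus fact in the lower bound, which genuinely uses $n$ being odd so that $\pi/2$ lies among the normal directions of $P_0$; the very same parity powers the norm computation for the extremal operator. Everything else is trigonometric bookkeeping once the supporting functionals and the change of coordinates $w=x-lz$ are in hand.
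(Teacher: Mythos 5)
Your proof is correct, and while it follows the same overall template as the paper (a lower bound coming from the three supporting functionals at the norm-attaining vertex, matched by an explicit quarter-turn operator), the execution of the lower bound is genuinely different and cleaner. The paper's Step 1 proves $\max_r |f_{1r}(Tx)|\geq\sin\frac{\pi}{2n}$ by a case analysis over which facet $G_j$, $F_{\pm1}$ contains $Tx$, with separate barycentric computations in each case; you instead shear by $(x,y,z)\mapsto(x-lz,y,z)$, rotate, observe that the two lateral functionals at a vertex combine into $|u|+\tan\frac{\pi}{2n}|v|$, and reduce everything to the single planar fact that the rhombus $\{|u|+\tan\frac{\pi}{2n}|v|\leq\sin\frac{\pi}{2n}\}$ is inscribed in the regular $2n$-gon, touching its boundary at the edge midpoints $(0,\pm\cos\frac{\pi}{2n})$ --- which exist as boundary points precisely because $n$ is odd. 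This isolates exactly where the parity hypothesis enters (the paper leaves it buried in inequalities like $1-\tan\frac{\pi}{2n}/\tan(2j-1)\frac{\pi}{2n}\geq 0$) and computes each $\kappa_i$ of Theorem \ref{theorem:estimate} exactly rather than merely bounding it. Your extremal operator is the paper's composed with the projection killing the $z$-output (the paper keeps a $\sin\frac{\pi}{2n}\,z$ component); both have norm one and numerical radius $\sin\frac{\pi}{2n}$, and your observation $Tv_j^-=Tv_j^+$ slightly shortens the verification at the bottom vertices. The only point worth making explicit if you write this up is the standard reduction of $v(T)$ to pairs of extreme points $x\in \textit{ext}\,B_{\mathbb{X}}$, $x^*\in \textit{ext}\,B_{\mathbb{X}^*}$ with $x^*(x)=1$, which the paper also uses (together with Theorem \ref{theorem:extreme}) without proof.
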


\begin{proof} We complete the proof in two steps. In  step 1, we show that if  $T \in S_{L(\mathbb{X})}$ then $v(T) \geq \sin\frac{\pi}{2n}$ and in step 2, we exhibit an operator  $T\in S_{L({\mathbb{X}})}$ such that $v(T) = \sin\frac{\pi}{2n}$.
 
\medskip

\textbf{Step 1.}  Let the vertices of $B_{\mathbb{X}}$ be  $v_{\pm{j}}, j \in \{1,2,\ldots,2n\}$, where $v_{\pm{j}}=(\cos(j-1)\frac{\pi}{n}\pm{l},\sin(j-1)\frac{\pi}{n},\pm{1})$. Let $G_j$ denote the facet of $B_{\mathbb{X}}$ containing $v_j, v_{-j}, v_{j+1}$, where $v_{2n+1}=v_1$. Let $F_1$ denote the facet of $B_{\mathbb{X}}$ containing $v_1,v_2,v_3$ and $F_{-1}$ denote the facet of $B_{\mathbb{X}}$ containing $v_{-1},v_{-2},v_{-3}$. 

Let $T\in S_{L({\mathbb{X}})}.$  We show that there exists $x \in S_{\mathbb{X}}$ such that $|x^*(Tx)|\geq \sin\frac{\pi}{2n}$ where $x^*(x)=1$ and $x^* \in S_{\mathbb{X^*}}$. Clearly $T$ attains norm at some extreme point $x=v_j$ of $B_{\mathbb{X}}$. Assume that $ x = v_1 $, the proof for other vertices will follow similarly. Then $Tx \in S_{\mathbb{X}}$ and so $Tx \in G_j$ for some $j\in \{ 1,2,\ldots,2n\}$ or $Tx \in F_1 \cup F_{-1}$. We consider the following four cases, depending on which part of the unit sphere $Tx$ belongs to.

\begin{figure}[ht]
\centering 
\includegraphics[width=0.5\linewidth]{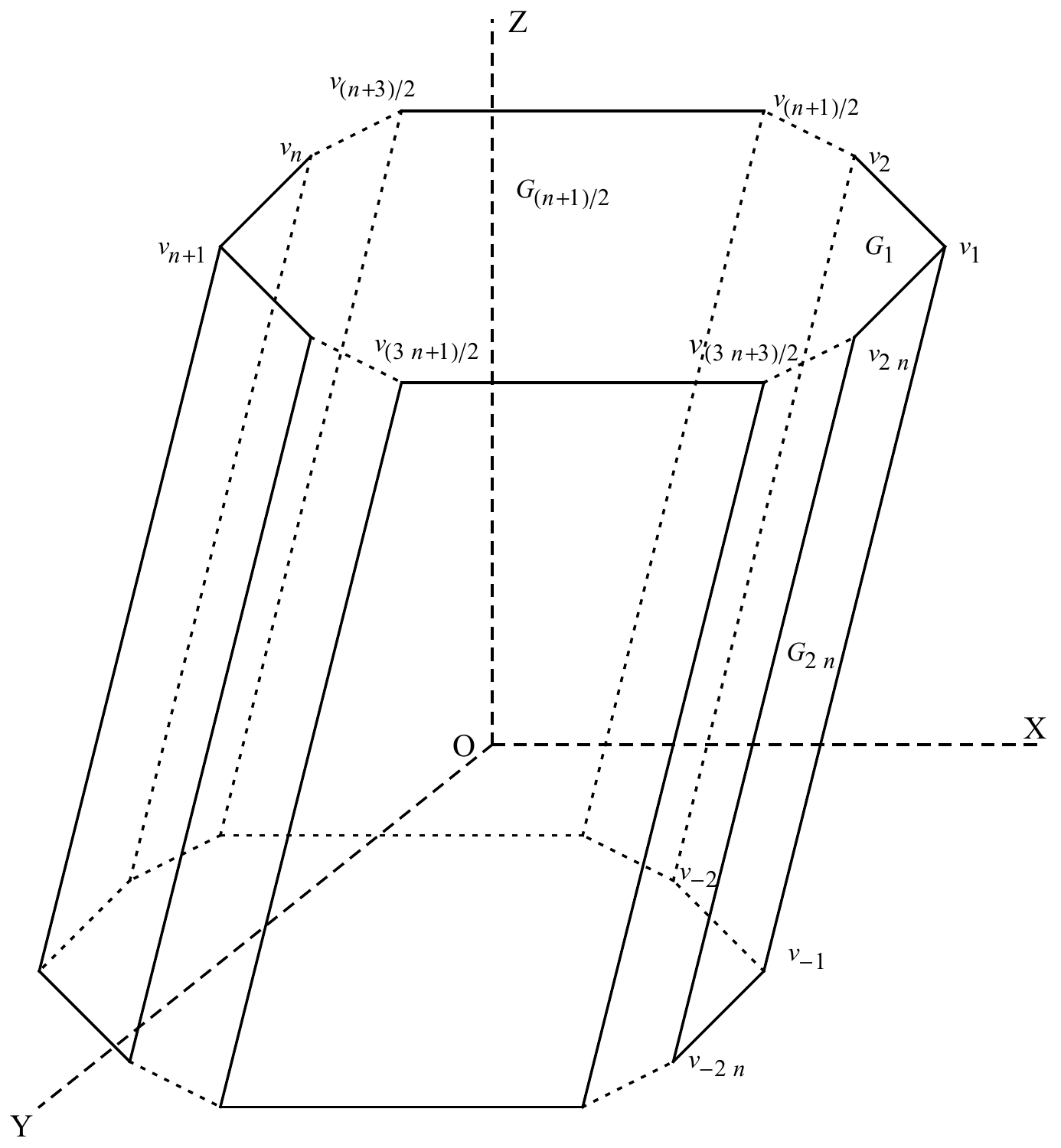}
\caption{}
\label{figure}
\end{figure}

\noindent \textbf{Case 1.} $(i)$ $Tx \in G_j$ for some $ j = 1,2, \ldots, \frac{n-1}{2}.$  Let $Tx=(\alpha,\beta,\gamma),$ then it is easy to see that $\beta=\frac{\cos\frac{\pi}{2n}-(\alpha-l \gamma) \cos(2j-1)\frac{\pi}{2n}}{\sin(2j-1)\frac{\pi}{2n}}$. Let $f_{11}$ be the supporting functional at $v_1$  such that $ (v_1 + ker~f_{11}) \cap S_{\mathbb{X}} = G_1.$  Clearly  $ f_{11}(1+l,0,1) = 1 $ and $ f_{11}(-\sin\frac{\pi}{2n}+l,\cos\frac{\pi}{2n},1) = f_{11}(\sin\frac{\pi}{2n}+l,-\cos\frac{\pi}{2n},1) = 0$.  Let $(\alpha,\beta,\gamma)=\alpha_1(1+l,0,1)+\beta_1(-\sin\frac{\pi}{2n}+l,\cos\frac{\pi}{2n},1)+\gamma_1(\sin\frac{\pi}{2n}+l,-\cos\frac{\pi}{2n},1)$, for some  $\alpha_1, \beta_1, \gamma_1 \in \mathbb{R}$. Then $f_{11}(Tx)=\alpha_1=(\alpha-l\gamma)(1-\frac{\tan\frac{\pi}{2n}}{\tan(2j-1)\frac{\pi}{2n}})+\frac{\sin\frac{\pi}{2n}}{\sin(2j-1)\frac{\pi}{2n}}$. Consider the linear functional $\phi$ on $\mathbb{X}$ as :  $ \phi(\alpha,\beta,\gamma)=\alpha-l\gamma.$ Then $ker~\phi$ is a hyperspace passing through the points  $ \frac {v_{\frac{n+1}{2}}+ v_{\frac{n+3}{2}}}{2}$ , $ \frac {v_{-\frac{n+1}{2}}+ v_{-\frac{n+3}{2}}}{2}$ and $ \frac {v_{\frac{3n+1}{2}}+ v_{\frac{3n+3}{2}}}{2}$ and $ ker~\phi \cap G_j = \emptyset $ for each $j \in \{1,2,\ldots, \frac{n-1}{2}\} .$ Since $ v_1 \in G_j$ and $\phi(v_1)=1 >0$, therefore   for  $(\alpha, \beta, \gamma) \in G_j$, $ \phi( \alpha, \beta, \gamma) = \alpha - l \gamma > 0$ for each $j \in \{1,2,\ldots, \frac{n-1}{2}\} .$ Also for each $j \in \{1,2,\ldots, \frac{n-1}{2}\} $ it is easy to see that $1-\frac{\tan\frac{\pi}{2n}}{\tan(2j-1)\frac{\pi}{2n}} \geq 0 $ and $ 0 < \sin(2j-1)\frac{\pi}{2n} < 1 $ and so $f_{11}(\alpha, \beta, \gamma) = (\alpha-l\gamma)(1-\frac{\tan\frac{\pi}{2n}}{\tan(2j-1)\frac{\pi}{2n}})+\frac{\sin\frac{\pi}{2n}}{\sin(2j-1)\frac{\pi}{2n}} > \sin{\frac{\pi}{2n}}.$\\
\noindent \textbf{Case 1.} $(ii)$  $Tx \in G_j $ for some $ j = n+1,n+2, \ldots, \frac{3n-1}{2}$. 
Clearly $G_{n+j} = -G_j $ for $ j = 1,2,\dots,\frac{n-1}{2}$ and so if $Tx \in G_j $ for some $ j = n+1,n+2, \ldots, \frac{3n-1}{2},$ then  $|f_{11}(Tx)|>\sin\frac{\pi}{2n}$.\\
\noindent \textbf{Case 2.} $(i)$  $Tx\in G_j$ for some $ j = n,n-1, \ldots, \frac{n+3}{2}.$ \\
\noindent \textbf{Case 2.} $(ii)$ $Tx \in G_j $ for some $ j = 2n,2n-1, \ldots, \frac{3n+3}{2}$.\\
Proceeding as in Case 1 we can show that $|f_{12}(Tx)|>\sin\frac{\pi}{2n}$, where $f_{12}$ is the supporting functional at $v_1$ such that $(v_1 + ker~f_{12}) \cap S_{\mathbb{X}} = G_{2n}.$\\
\noindent \textbf{Case 3.}   $Tx\in F_1 \cup F_{-1} $. Let $f_{13}$ be the supporting functional at $v_1$  such that $ (v_1 + ker~f_{13}) \cap S_{\mathbb{X}} = F_1.$   Then it is easy to observe that $|f_{13}(Tx)|=1>\sin \frac{\pi}{2n}$. \\ 
\noindent \textbf{Case 4.}  $Tx \in G_{\frac{n+1}{2}} \cup G_{\frac{3n+1}{2}}$. If $Tx \in G_{\frac{n+1}{2}}$ then $f_{11}(Tx)=\alpha-l\gamma +\sin \frac{\pi}{2n}$ and $f_{12}(Tx)=\alpha-l\gamma-\sin \frac{\pi}{2n}$.  If $\alpha-l\gamma \geq 0$ then $f_{11}(Tx)\geq \sin \frac{\pi}{2n}$. If  $\alpha-l\gamma < 0$ then $f_{12}(Tx) < -\sin \frac{\pi}{2n}$, i.e., $|f_{12}(Tx)|>\sin \frac{\pi}{2n}$. \\
If $Tx \in G_{\frac{3n+1}{2}}$ then $Tx \in -G_{\frac{n+1}{2}}$. Therefore as before $ \mid f_{11}(Tx) \mid \geq \sin \frac{\pi}{2n}$ and $ \mid f_{12}(Tx) \mid \geq \sin \frac{\pi}{2n}$.\\
\noindent Thus in all cases there exists a supporting functional $x^*$ at $x$ so that $ \mid x^*(Tx) \mid \geq \sin \frac{\pi}{2n}$ and so $v(T)\geq \sin \frac{\pi}{2n}$. This completes Step 1.

\medskip

\textbf{Step 2.} Let us consider a linear operator $ T :\mathbb{X}\rightarrow \mathbb{X}$ as
\begin{eqnarray*}
Tv_1=&T(1+l,0,1)=&(l\sin\frac{\pi}{2n},\cos\frac{\pi}{2n},\sin\frac{\pi}{2n}),\\
Tv_2=&T(\cos\frac{\pi}{n}+l,\sin\frac{\pi}{n},1)=&(-\sin\frac{\pi}{n}\cos\frac{\pi}{2n}+l\sin\frac{\pi}{2n},\cos\frac{\pi}{n}\cos\frac{\pi}{2n},\sin\frac{\pi}{2n}), \\
Tv_3=&T(\cos\frac{2\pi}{n}+l,\sin\frac{2\pi}{n},1)=&(-\sin\frac{2\pi}{n}\cos\frac{\pi}{2n}+l\sin\frac{\pi}{2n},\cos\frac{2\pi}{n}\cos\frac{\pi}{2n},\sin\frac{\pi}{2n}). 
\end{eqnarray*}
It is easy to check that  $Tv_j=T(\cos(j-1)\frac{\pi}{n}+l,\sin(j-1)\frac{\pi}{n},1)=(-\sin(j-1)\frac{\pi}{n}\cos\frac{\pi}{2n}+l\sin\frac{\pi}{2n},\cos(j-1)\frac{\pi}{n}\cos\frac{\pi}{2n},\sin\frac{\pi}{2n})$, $\forall j=4,5,\ldots,2n$ and  $Tv_j \in G_{\frac{n+2j-1}{2}}$ $  \forall j=1,2,\ldots,2n$, where $v_{2n+k}=v_k$ and $ G_{2n+k}=G_k$, for $k=1,2,\ldots,2n$. Then $\| T \|=1$. It follows from Theorem \ref{theorem:extreme}  that $ \textit{ext}~B_{\mathbb{X}^*} = \{f_{jk}~:1 \leq j \leq 2n~~ \textit{and}~~ 1\leq k \leq 3\}$ and so $v(T)=\max \{ \vert f_{jk}(Tv_j)\vert~:1 \leq j \leq 2n ~~\mbox{and} ~~1\leq k \leq 3\}$. 
Now consider the action of $f_{j1}$ on $Tv_j$. It is easy to check that $(\cos(j+\frac{n-1}{2})\frac{\pi}{n}+l,\sin(j+\frac{n-1}{2})\frac{\pi}{n},1), (-\cos(j+\frac{n-1}{2})\frac{\pi}{n}+l,-\sin(j+\frac{n-1}{2})\frac{\pi}{n},1) \in \ker f_{j1}$. Let $(v_j=\alpha_{j1}(\cos(j-1)\frac{\pi}{n}+l,\sin(j-1)\frac{\pi}{n},1)+\beta_{j1}(\cos(j+\frac{n-1}{2})\frac{\pi}{n}+l,\sin(j+\frac{n-1}{2})\frac{\pi}{n},1)+\gamma_{j1}(-\cos(j+\frac{n-1}{2})\frac{\pi}{n}+l,-\sin(j+\frac{n-1}{2})\frac{\pi}{n},1)$. Then $f_{j1}(Tv_j)=\alpha_{j1}=\sin\frac{\pi}{2n}$. \\
Next we consider the action of $f_{j2}$ on $Tv_j$. It is easy to check that $(\cos(j+\frac{n-3}{2})\frac{\pi}{n}+l,\sin(j+\frac{n-3}{2})\frac{\pi}{n},1), (-\cos(j+\frac{n-3}{2})\frac{\pi}{n}+l,-\sin(j+\frac{n-3}{2})\frac{\pi}{n},1) \in \ker f_{j2}$. Let $Tv_j=\alpha_{j2}(\cos(j-1)\frac{\pi}{n}+l,\sin(j-1)\frac{\pi}{n},1)+\beta_{j2}(\cos(j+\frac{n-3}{2})\frac{\pi}{n}+l,\sin(j+\frac{n-3}{2})\frac{\pi}{n},1)+\gamma_{j2}(-\cos(j+\frac{n-3}{2})\frac{\pi}{n}+l,-\sin(j+\frac{n-3}{2})\frac{\pi}{n},1)$. Then $f_{j2}(Tv_j)=\alpha_{j2}=-\sin\frac{\pi}{2n}$. \\
Finally, we consider the action of $f_{j3}$ on $Tv_j$. It is easy to check that $(1,0,0),$ $ (\cos\frac{\pi}{n},\sin\frac{\pi}{n},0) \in \ker f_{j3}$. 
Let $Tv_j=\alpha_{j3}(\cos(j-1)\frac{\pi}{n}+l,\sin(j-1)\frac{\pi}{n},1)+\beta_{j3}(1,0,0)+\gamma_{j3}(\cos\frac{\pi}{n},\sin\frac{\pi}{n},0)$. Then $f_{j3}(Tv_j)=\alpha_{j3}=\sin\frac{\pi}{2n}$. \\
Therefore, $v(T)=\max_{1\leq j\leq 2n} \{ \vert f_{j1}(Tv_j)\vert, \vert f_{j2}(Tv_j)\vert, \vert f_{j3}(Tv_j)\vert \}=\sin\frac{\pi}{2n}$. This completes the proof of the theorem.
\end{proof}

The case when $ n $ is even can be treated similarly to obtain the following result:
\begin{theorem}\label{theorem:even}
Let $\mathbb{X}$ be a $3$-dimensional polyhedral Banach space such that $B_{\mathbb{X}}$ is a prism with vertices $(\cos(j-1)\frac{\pi}{n} \pm{l}, \sin(j-1)\frac{\pi}{n},\pm{1})$,  $j \in \{1,2,\ldots,2n\}$,  where $l \in \mathbb{R}$ and $n$ is even. Then $n(\mathbb{X}) = \tan\frac{\pi}{2n}. $ 
\end{theorem}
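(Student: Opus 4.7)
The plan is to mirror the two-step argument in the proof of Theorem \ref{theorem:odd radius}, adapting both the case analysis and the witnessing operator to the parity change. In Step~1 we will show that $v(T) \geq \tan\frac{\pi}{2n}$ for every $T \in S_{L(\mathbb{X})}$; in Step~2 we will exhibit an explicit operator $T \in S_{L(\mathbb{X})}$ realising equality. By Theorem \ref{theorem:extreme}, the extreme points of $B_{\mathbb{X}^*}$ are precisely the supporting functionals at the facets of $B_{\mathbb{X}}$, so once $T$ attains its norm at a vertex $v_k$ the numerical radius reduces to $\max_{r}|f_{kr}(Tv_k)|$ over the three supporting functionals at $v_k$.

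For Step~1, I would label the vertices $v_{\pm j}$ $(j=1,\ldots,2n)$, the side facets $G_j$ (each containing $v_j, v_{-j}, v_{j+1}$, indices modulo $2n$), and the base facets $F_{\pm 1}$. By the cyclic symmetry of the prism we may assume $k=1$, and let $f_{11}, f_{12}, f_{13}$ be the supporting functionals at $v_1$ corresponding to $G_1$, $G_{2n}$, $F_1$ respectively. The same four cases as in the odd proof apply: (i) $Tv_1 \in G_j$ on one half where $f_{11}$ dominates; (ii) the antipodal half where $f_{12}$ dominates; (iii) $Tv_1 \in F_{\pm 1}$, giving $|f_{13}(Tv_1)|=1$; and (iv) $Tv_1$ on the critical facet where the two side functionals must be balanced. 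The decisive difference appears in case (iv). For odd $n$, the line through $v_1$ and the prism axis bisects an edge of the antipodal polygon, producing the ratio $\sin\frac{\pi}{2n}/\sin((2j-1)\frac{\pi}{2n})$ with worst bound $\sin\frac{\pi}{2n}$. For even $n$, this line passes through the antipodal vertex $v_{n+1}$, so the critical facets are $G_{n/2}$ and $G_{n/2+1}$; the affine expansion of $Tv_1$ in these facets carries an extra factor of $\cos\frac{\pi}{2n}$ in the denominator, turning $\sin\frac{\pi}{2n}$ into $\sin\frac{\pi}{2n}/\cos\frac{\pi}{2n} = \tan\frac{\pi}{2n}$.

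For Step~2, I would adapt the rotation-type operator of the odd case, rescaling the third coordinate so that the $z$-component of each $Tv_j$ equals $\tan\frac{\pi}{2n}$ rather than $\sin\frac{\pi}{2n}$, and so that each $Tv_j$ lands on the side facet $G_{\sigma(j)}$ dictated by the even parity. After checking $\|T\|=1$ by verifying $Tv_j \in S_{\mathbb{X}}$ for each $j$, one computes $f_{jk}(Tv_j)$ for $k=1,2,3$; the coefficients $\alpha_{jk}$ extracted from the affine expansions in the appropriate bases of $G_{\sigma(j)}$ and $F_{\pm 1}$ all reduce to $\pm\tan\frac{\pi}{2n}$, giving $v(T) = \tan\frac{\pi}{2n}$ via Theorem \ref{theorem:extreme}. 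The main obstacle is the bookkeeping in case (iv) of Step~1 and in the corresponding construction of Step~2: the parity change breaks the antipodal symmetry on which the odd case rests, and one must keep the index arithmetic modulo $2n$ consistent for all $j$ simultaneously while ensuring that the candidate operator genuinely has operator norm one (the most natural rescaling can easily overshoot).
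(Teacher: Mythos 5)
Your overall skeleton is the right one, and it is the same one the paper intends (the paper omits the proof entirely, saying only that the even case "can be treated similarly"). Your Step~1, however, slightly misdiagnoses where the parity enters. The plane $\ker\phi$, $\phi(\alpha,\beta,\gamma)=\alpha-l\gamma$, meets the cross-sectional $2n$-gon at angles $\tfrac{\pi}{2}$ and $\tfrac{3\pi}{2}$; for even $n$ these are the \emph{vertices} $v_{n/2+1}$ and $v_{3n/2+1}$, so no side facet straddles $\ker\phi$ and the analogue of Case~4 of Theorem~\ref{theorem:odd radius} is empty. All side facets are handled by Cases~1--2: the identity $f_{11}(Tx)=(\alpha-l\gamma)\bigl(1-\tfrac{\tan\frac{\pi}{2n}}{\tan(2j-1)\frac{\pi}{2n}}\bigr)+\tfrac{\sin\frac{\pi}{2n}}{\sin(2j-1)\frac{\pi}{2n}}$ still applies for $j\le n/2$ with $\alpha-l\gamma\ge 0$ there, and the worst facet is $G_{n/2}$, where $(2j-1)\tfrac{\pi}{2n}=\tfrac{\pi}{2}-\tfrac{\pi}{2n}$ gives the constant term $\sin\tfrac{\pi}{2n}/\cos\tfrac{\pi}{2n}=\tan\tfrac{\pi}{2n}$, attained on the vertical edge where $\alpha-l\gamma=0$. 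So your identification of $G_{n/2},G_{n/2+1}$ as critical and of the $\cos\tfrac{\pi}{2n}$ denominator as the source of $\tan$ is correct, even though the "straddling facet'' picture is not.

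The genuine gap is in Step~2. If you keep the odd-case operator's horizontal contraction by $\cos\tfrac{\pi}{2n}$ and merely change the third coordinate of $Tv_j$ from $\sin\tfrac{\pi}{2n}$ to $\tan\tfrac{\pi}{2n}$, then in the sheared frame $Tv_j$ has horizontal part of polygon-norm $\cos\tfrac{\pi}{2n}$ pointing in a \emph{vertex} direction (rotation by $\tfrac{\pi}{2}$ sends vertex directions to vertex directions exactly when $n$ is even), hence $\|Tv_j\|=\max\{\cos\tfrac{\pi}{2n},\tan\tfrac{\pi}{2n}\}<1$ for all even $n\ge 4$; renormalising destroys the sharp value. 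The fix is not a rescaling but a structural change: drop the factor $\cos\tfrac{\pi}{2n}$ altogether and take, in sheared coordinates, $T:(x,y,z)\mapsto(-y,x,z\tan\tfrac{\pi}{2n})$, i.e.
\[
Tv_j=\Bigl(\cos\bigl((j-1)\tfrac{\pi}{n}+\tfrac{\pi}{2}\bigr)+l\tan\tfrac{\pi}{2n},\ \sin\bigl((j-1)\tfrac{\pi}{n}+\tfrac{\pi}{2}\bigr),\ \tan\tfrac{\pi}{2n}\Bigr),
\]
so that each $Tv_j$ lands on a vertical \emph{edge} of the prism (not in the interior of a single facet $G_{\sigma(j)}$ as you propose), whence $\|Tv_j\|=1$ and $\|T\|=1$. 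Then $f_{j1}(Tv_j)=\cos(\tfrac{\pi}{2}-\tfrac{\pi}{2n})/\cos\tfrac{\pi}{2n}=\tan\tfrac{\pi}{2n}$, $f_{j2}(Tv_j)=-\tan\tfrac{\pi}{2n}$, $f_{j3}(Tv_j)=\tan\tfrac{\pi}{2n}$, giving $v(T)=\tan\tfrac{\pi}{2n}$. You flagged the norm-one issue as a risk but left it unresolved; as stated your construction does not produce a norm-one operator, so this step needs the above correction before the proof is complete.
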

\begin{remark}\label{remark:generalization} Taking $ l=0 $ in Theorem \ref{theorem:odd radius} and Theorem \ref{theorem:even}, we obtain the numerical index of a $3$-dimensional polyhedral Banach space whose unit ball is a right prism with regular $2n$-gon as its base. We also observe that Theorem \ref{theorem:odd radius} and Theorem \ref{theorem:even} remains unchanged even if the height of the prism is changed. In particular, taking the height of the prism to be $ 0, $ this gives us another method, in comparison with Theorem $ 5 $ of \cite{MJ}, to compute the numerical index of $2$-dimensional polyhedral Banach spaces whose unit balls are regular $2n$-gons. 
\end{remark}

\begin{remark}\label{remark:direct sum}
We would like to further note that Theorem \ref{theorem:odd radius} and Theorem \ref{theorem:even} can also be proved using another approach. In Proposition $ 1 $ of \cite{MR}, Mart\'in and Mer\'i has proved in particular that if $ \mathbb{X},\mathbb{Y} $ are two Banach spaces then $ n(\mathbb{X} \oplus_{\infty} \mathbb{Y}) = \min\{ n(\mathbb{X}),n(\mathbb{Y}) \}.$ Therefore, combining Theorem $ 5 $ of \cite{MJ} with proposition $ 1 $ of \cite{MR}, we obtain the numerical index of the $3$-dimensional polyhedral Banach spaces whose unit balls are right prisms with regular polyhedrons as its base. Finally, we observe that the $3$-dimensional polyhedral Banach space whose unit ball is an oblique prism with regular polyhedron as its base is isometric to the $3$-dimensional polyhedral Banach space whose unit ball is a right prism with regular polyhedron as its base. 
\end{remark}

In view of Remark \ref{remark:direct sum}, we next present the example of a polyhedral Banach space that can not be written as the infinity sum of a lower dimensional polyhedral Banach space with a suitably chosen Banach space. Indeed, we compute the numerical index of such a polyhedral Banach space using the notion of supporting functional corresponding to a facet of the unit ball of the concerned space. This further illustrates the applicability of our method towards computing the numerical index of polyhedral Banach spaces.

\begin{theorem}\label{theorem:pyramid}

Let $\mathbb{X}$  be a $3$-dimensional polyhedral Banach space such that $B_{\mathbb{X}}$ is a polyhedron obtained by gluing two pyramids at the opposite base faces of a right prism having square base, with vertices $ \pm(1,1,1), \pm(-1,1,1), \pm(-1,-1,1), \pm(1,-1,1),$ $ \pm(0,0,2)$.  Then $n(\mathbb{X})= \frac{1}{2}$.
\end{theorem}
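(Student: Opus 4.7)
The plan is to prove $n(\mathbb{X})\geq 1/2$ by a case split on the extreme point at which a unit operator attains its norm, and to match this by exhibiting an explicit operator. First I set up. The twelve facets of $B_{\mathbb{X}}$ consist of four top-pyramid triangles, with supporting functionals $g_1(w)=(w_y+w_z)/2$, $g_2(w)=(-w_x+w_z)/2$, $g_3(w)=(-w_y+w_z)/2$, $g_4(w)=(w_x+w_z)/2$; four bottom-pyramid triangles with the negatives of these; and four rectangular sides with functionals $\pm w_x,\pm w_y$. Theorem~\ref{theorem:extreme} then identifies the extreme points of $B_{\mathbb{X}^{*}}$ with exactly these. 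A direct calculation gives the description $B_{\mathbb{X}}=\{(x,y,z):|x|,|y|\leq 1,\ \max(|x|,|y|)+|z|\leq 2\}$ and the norm formula $\|(x,y,z)\|=\max(\max(|x|,|y|),\,\tfrac{1}{2}(\max(|x|,|y|)+|z|))$.

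For the lower bound, let $T\in S_{L(\mathbb{X})}$ and let $v\in\mathrm{ext}\,B_{\mathbb{X}}$ satisfy $\|Tv\|=1$. If $v$ is one of the eight prism vertices, say $v=(1,1,1)$, its supporting functionals are $g_1,g_4,e_x^{*},e_y^{*}$; I apply Theorem~\ref{theorem:estimate} with the three functionals $g_4,e_x^{*},e_y^{*}$. Any $w\in S_{\mathbb{X}}$ with $\max(|g_4(w)|,|w_x|,|w_y|)<1/2$ must avoid the four side facets (since $|w_x|,|w_y|<1/2$); moreover $|g_4(w)|<1/2$ combined with $|w_x|<1/2$ forces $|w_z|<3/2$, which rules out every top- and bottom-pyramid facet (each defining equation pushes $|w_z|$ to at least $3/2$). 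This contradicts $\|w\|=1$, so $\kappa_{v}=1/2$, and the $D_{4}\times\mathbb{Z}_{2}$ symmetry extends the bound to every prism vertex.

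The apex case is more delicate, because each choice of three of the four supporting functionals at $(0,0,\pm 2)$ already admits a point of $S_{\mathbb{X}}$ at which all three are strictly less than $1/2$ in modulus (for instance $|g_1|=|g_2|=|g_3|=1/4$ at $(1,0,1/2)\in S_{\mathbb{X}}$); thus Theorem~\ref{theorem:estimate} with $n=3$ alone yields only $\kappa_{v}\leq 1/4$ at the apex. The key observation is the identity $\max(|g_1(w)|,|g_2(w)|,|g_3(w)|,|g_4(w)|)=\tfrac{1}{2}(\max(|w_x|,|w_y|)+|w_z|)$, which together with the norm formula above forces this quantity to be at least $1/2$ whenever $\|w\|=1$. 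Applied to $w=Tv$ for $v=(0,0,\pm 2)$, this yields $v(T)\geq 1/2$ in the apex case and completes the lower bound.

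For the upper bound I propose the rank-one operator $T(x,y,z)=(z/2,z/2,0)$. Its norm equals $1$, attained at the apices (since $T(0,0,\pm 2)=(\pm 1,\pm 1,0)$ lies on an edge where two side facets meet) while every prism vertex is mapped to $(\pm 1/2,\pm 1/2,0)$ of norm $1/2$. A direct evaluation at each of the ten extreme points $v$ and their supporting functionals shows $|f(Tv)|\leq 1/2$ throughout, with equality attained, so $v(T)=1/2$. The principal obstacle of the whole proof is the apex lower bound: Theorem~\ref{theorem:estimate} is genuinely too weak there, forcing the separate use of the identity above, whereas the prism-vertex analysis and the upper bound reduce to routine facet-by-facet inspection.
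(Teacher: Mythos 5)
Your proof is correct and follows essentially the same strategy as the paper: a lower bound obtained by evaluating supporting functionals at the extreme point where a norm-one operator attains its norm (split into the prism-vertex and apex cases, the apex requiring all four pyramid functionals just as the paper's Case 2 plays the opposite pair $f_1,f_3$ against the sign of $\gamma$), matched by an explicit degenerate operator sending the apex to the unit sphere. Your algebraic packaging via the norm formula and the identity $\max_i|g_i(w)|=\tfrac12(\max(|w_x|,|w_y|)+|w_z|)$ is a cleaner rendering of the paper's facet-by-facet case analysis, and your witness $T(x,y,z)=(z/2,z/2,0)$ differs harmlessly from the paper's choice $T(x,y,z)=(z/2,0,0)$.
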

\begin{proof}  We complete the proof in two steps. In  step 1, we show that if  $T \in S_{L(\mathbb{X})}$ then $v(T) \geq \frac{1}{2}$ and in step 2, we exhibit an operator  $T\in S_{L({\mathbb{X}})}$ such that $v(T) = \frac{1}{2}$.
 
\medskip

\textbf{Step 1.}
Let the vertices of $B_{\mathbb{X}}$ be $v_{\pm j}, j\in \{1,2,3,4\} $ and $w_{\pm 1}$ where $v_{\pm 1}=(1,1,\pm 1)$, $v_{\pm 2}=(-1,1,\pm 1)$, $v_{\pm 3}=(-1,-1,\pm 1)$, $v_{\pm 4}=(1,-1,\pm 1)$ and $w_{\pm 1}=(0,0,\pm 2)$. Let $G_j$ denote the facet of $B_{\mathbb{X}}$  containing $v_j, v_{j+1}, v_{-j}$ where $v_5=v_1$. Let $F_{\pm j}$ denote the facet of $B_{\mathbb{X}}$  containing $v_{\pm j}, v_{\pm(j+1)}, w_{\pm 1}$ where $v_{\pm 5}=v_{\pm 1}$. 
Let $T\in S_{L({\mathbb{X}})}$.  We show that there exists $x \in S_{\mathbb{X}}$ such that $|x^*(Tx)|\geq \frac{1}{2}$ where $x^*(x)=1$ and $x^* \in S_{\mathbb{X^*}}$. Clearly $T$ attains norm at some extreme point $x$ of $B_{\mathbb{X}}$. Assume that $ x =v_1$ or  $ x =w_1$, the proof for other vertices will follow similarly. Then $Tx \in S_{\mathbb{X}}$ and so $Tx \in G_j$ or $Tx \in F_{\pm j}$  for some $j\in \{ 1,2,3,4\}$ . We consider the following four cases, depending on which part of the unit sphere $Tx$ belongs to.

\begin{figure}[ht]
\centering 
\includegraphics[width=0.3\linewidth]{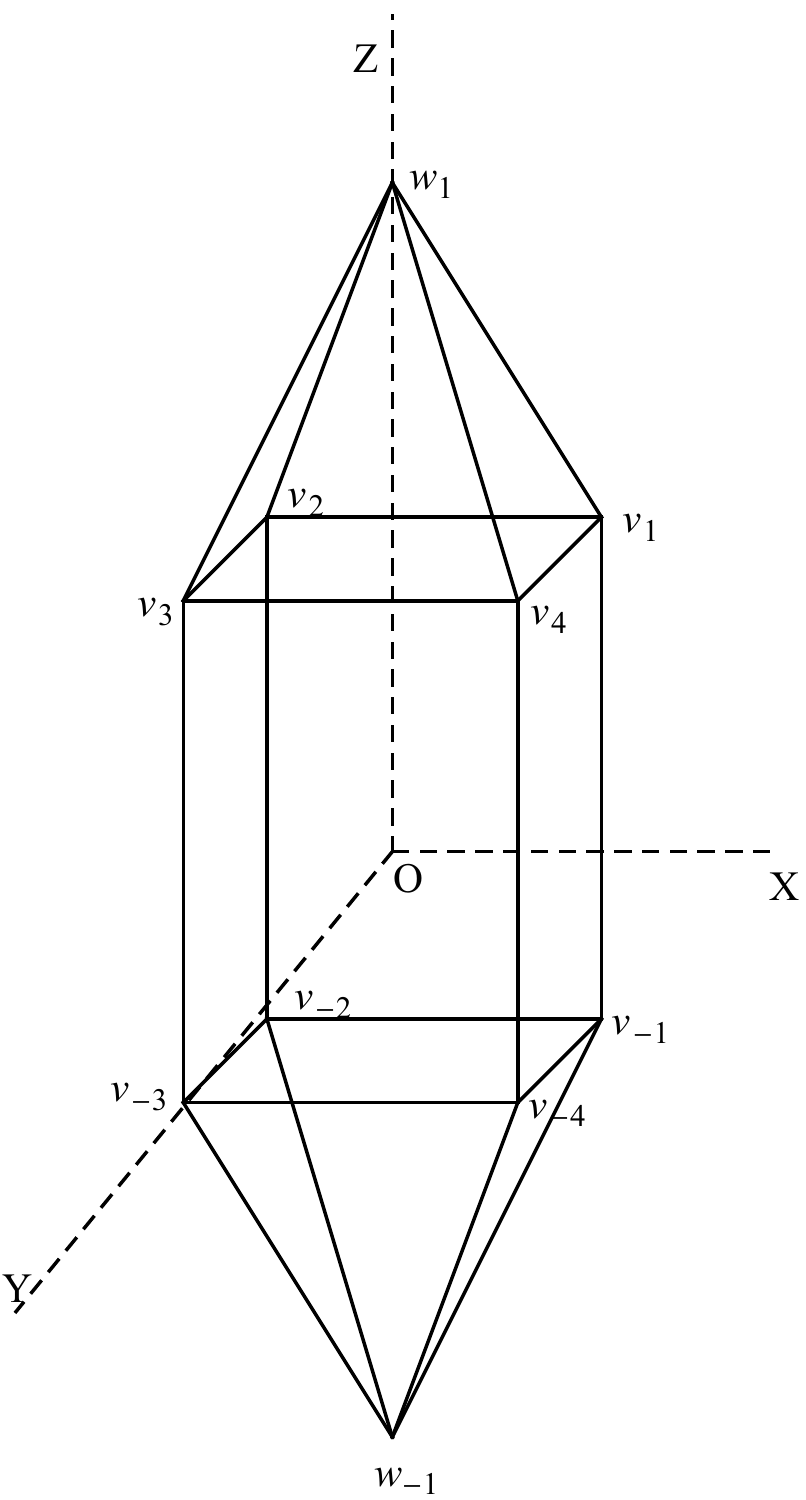}
\caption{}
\label{figure}
\end{figure}

\noindent \textbf{Case 1.} $x=w_1$ and $Tx \in \pm F_j$ for some $j \in \{1,2,3,4\}.$ Let $f_j$ be the supporting functional at $w_1$ such that $(w_1+\ker f_j) \cap S_{\mathbb{X}} = F_j.$  Then it is easy to observe that $|f_j(Tx)|=1 >\frac{1}{2}.$\\

\noindent \textbf{Case 2.} $x=w_1$ and $Tx \in  G_j$ for some $j \in \{1,2,3,4\}.$
 We assume $Tx \in G_1$, the proof for other facets $G_2, G_3, G_4$ will follow similarly. Let $Tx=(\alpha, \beta, \gamma)$ then $\beta=1, -1\leq \alpha\leq 1, -1\leq \gamma\leq 1$. Let $f_j$ be the supporting functional at $w_1$ such that $(w_1+\ker f_j) \cap S_{\mathbb{X}} = F_j.$
  Then it is easy to see that $f_1(Tx)=\frac{1}{2}+\frac{\gamma}{2}$ and $f_3(Tx)=-\frac{1}{2}+\frac{\gamma}{2}$. If $\gamma \geq 0$ then $f_1(Tx)\geq \frac{1}{2}$. If   $\gamma < 0$ then $f_3(Tx)< -\frac{1}{2}$, i.e., $|f_3(Tx)| > \frac{1}{2}$.\\

\noindent \textbf{Case 3.} $x=v_1$ and $Tx \in  G_j$ for some $j \in \{1,2,3,4\}.$ For each $j\in \{1,4\}$, let $g_j$ be the supporting functionals at $v_1$ such that $(v_1+\ker g_j) \cap S_{\mathbb{X}} = G_j.$  Since $G_1=-G_3$ and $G_2=-G_4$, so it is easy to observe that if $Tx \in G_1 \cup G_3$ then $|g_1(Tx)|=1>\frac{1}{2}$ and  if $Tx \in G_2 \cup G_4$ then $|g_4(Tx)|=1>\frac{1}{2}$.\\

\noindent \textbf{Case 4.} $x=v_1$ and $Tx \in  \pm F_j$ for some $j \in \{1,2,3,4\}.$ For each $j\in \{1,4\}$, let $g_j , f_j $ be the supporting functionals at $v_1$ such that $(v_1+\ker g_j) \cap S_{\mathbb{X}} = G_j  , (v_1+\ker f_j) \cap S_{\mathbb{X}} = F_j.$ \\

\noindent $(i)$ If $Tx\in \pm F_1$ then it is easy to observe that $|f_1(Tx)|=1>\frac{1}{2}$.\\

\noindent $(ii)$ If $Tx\in \pm F_4$ then it is easy to observe that $|f_4(Tx)|=1>\frac{1}{2}$.\\

\noindent $(iii)$ If  $Tx\in F_2$ and $Tx=(\alpha,\beta,\gamma)$ then we have, $\alpha=-\lambda-\mu, \beta=\lambda-\mu,\gamma=2-\lambda-\mu$ where $\lambda \geq 0,   \mu\geq 0,  \lambda+\mu \leq 1$. An easy calculation we have  $f_4(Tx)=1-\lambda-\mu, g_4(Tx)=-\lambda-\mu.$ Therefore if $\lambda+\mu \geq \frac{1}{2}$ then $|g_4(Tx)|\geq \frac{1}{2}$ and if $\lambda+\mu <\frac{1}{2}$ then $f_4(Tx)>\frac{1}{2}.$ Therefore if $Tx \in -F_2$ then as above $|g_4(Tx)|\geq \frac{1}{2}$ when $\lambda+\mu \geq \frac{1}{2}$ and $|f_4(Tx)|>\frac{1}{2}$ when $\lambda+\mu <\frac{1}{2}.$\\

\noindent $(iv)$ If  $Tx\in F_3$ and $Tx=(\alpha,\beta,\gamma)$ then we have, $\alpha=-\lambda+\mu, \beta=-\lambda-\mu,\gamma=2-\lambda-\mu$ where $\lambda \geq 0,   \mu\geq 0,  \lambda+\mu \leq 1$. An easy calculation we have  $f_1(Tx)=1-\lambda-\mu, g_1(Tx)=-\lambda-\mu.$ Therefore if $\lambda+\mu \geq \frac{1}{2}$ then $|g_1(Tx)|\geq \frac{1}{2}$ and if $\lambda+\mu <\frac{1}{2}$ then $f_1(Tx)>\frac{1}{2}.$ Therefore if $Tx \in -F_3$ then as above $|g_1(Tx)|\geq \frac{1}{2}$ when $\lambda+\mu \geq \frac{1}{2}$ and $|f_1(Tx)|>\frac{1}{2}$ when $\lambda+\mu <\frac{1}{2}.$\\

\noindent Thus there exists a supporting functional $x^*$ at $x$ so that $ \mid x^*(Tx) \mid \geq  \frac{1}{2}.$ Therefore, $v(T)\geq \frac{1}{2}$. This completes the proof of step 1.

\medskip 

\textbf{Step 2.} Let us define a linear operator $ T :\mathbb{X}\rightarrow \mathbb{X}$ as
\begin{eqnarray*}
T(0,0,2)&=&(1,0,0),\\
T(1,0,0)&=&(0,0,0), \\
T(0,1,0)&=&(0,0,0).
\end{eqnarray*}
It is easy to check that $Tv_i=(\frac{1}{2},0,0)\in B_{\mathbb{X}}$ for each $i\in \{1,2,3,4\}$. Since $Tw_1=(1,0,0) \in S_{\mathbb{X}}$, so $\|T\|=1$.
It follows from Theorem \ref{theorem:extreme} that  $ \textit{ext}~B_{\mathbb{X}^*} = \{g_i,f_{\pm i}~:1 \leq i \leq 4\}.$ Therefore, $v(T)=\max \{ |g_i(Tv_i)|,|g_{i-1}(Tv_i)|,|f_i(Tv_i)|,|f_{i-1}(Tv_i)|,$ $|f_j(Tw_1)|~~:~~~1\leq i,j\leq 4,g_0=g_4,f_0=f_4 \}$. Now, the four supporting functionals at $ v_j $ are $g_j, g_{j-1}, f_j, f_{j-1}$ and it is easy to see that for each $j\in \{1,2,3,4\}$, 
\[ \max \{ |g_j(Tv_j)|,|g_{j-1}(Tv_j)|,|f_j(Tv_j)|,|f_{j-1}(Tv_j)|\}=\frac{1}{2}.\]	
Similarly, the four supporting functionals at $ w_1 $ are $f_j, j\in \{1,2,3,4\}$ and it is easy to see that 
\[ \max \{|f_j(Tw_1)|~~:~~1\leq j\leq 4\}=\frac{1}{2}.\]
Therefore, $v(T)=\max \{ |g_i(Tv_i)|,|g_{i-1}(Tv_i)|,|f_i(Tv_i)|,|f_{i-1}(Tv_i)|,|f_j(Tw_1)|~~:~~~1\leq i,j\leq 4,g_0=g_4,f_0=f_4 \}=\frac{1}{2}$. This completes the proof of the theorem.
\end{proof}

 Using similar arguments, the following result can be proved:
\begin{theorem}\label{theorem:prism pyramid}
Let $\mathbb{X}$ be a $3$-dimensional polyhedral Banach space such that $B_{\mathbb{X}}$ is a polyhedron with vertices $(\cos(j-1)\frac{\pi}{n}, \sin(j-1)\frac{\pi}{n},\pm{1}), (0,0,\pm 2),  j \in \{1,2,\ldots,2n\}, n\geq 3$ . Then $n(\mathbb{X}) = \sin\frac{\pi}{2n}$,  when $n$ is odd and $n(\mathbb{X}) = \tan\frac{\pi}{2n}$,  when $n$ is even.
\end{theorem}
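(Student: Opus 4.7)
The plan is to execute the two-step strategy used in Theorems \ref{theorem:odd radius}, \ref{theorem:even}, and \ref{theorem:pyramid}, with a case analysis reflecting the three families of facets of the present polyhedron. I first identify these facets: the $2n$ rectangular side facets $G_j$ (which coincide with the side facets of the right prism over the unit $2n$-gon), and the $2n+2n$ triangular pyramid facets $F_{\pm j}$ with $F_{j}=\mathrm{conv}\{v_j,v_{j+1},w_1\}$ and $F_{-j}=\mathrm{conv}\{v_{-j},v_{-(j+1)},w_{-1}\}$. By Theorem \ref{theorem:extreme}, the corresponding $6n$ supporting functionals $g_j,f_{\pm j}$ exhaust $\mathrm{ext}\,B_{\mathbb{X}^{*}}$; four of them (namely $g_{j-1},g_j,f_{j-1},f_j$) attain norm at each polygon vertex $v_j$, while $2n$ of them do so at each apex $w_{\pm 1}$. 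Using the rotational and $z$-reflection symmetry of $B_{\mathbb{X}}$, in Step~1 I may assume $T$ attains its norm at $v_1$ or at $w_1$.

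For Step~1, given $T\in S_{L(\mathbb{X})}$, I would split on which facet contains $Tx$. If $x=v_1$ and $Tx\in\pm G_j$, then the two side-facet functionals $g_1,g_{2n}$ at $v_1$ are identical to those appearing in the pure prism, so the Case~1--4 argument of Theorem \ref{theorem:odd radius} (respectively Theorem \ref{theorem:even}) applies verbatim and supplies some $g\in\{g_1,g_{2n}\}$ with $|g(Tx)|\geq\sin\frac{\pi}{2n}$ (resp.\ $\tan\frac{\pi}{2n}$). If $x=w_1$ and $Tx\in\pm F_j$, then $|f_j(Tx)|=1$. The two genuinely new configurations are handled in the spirit of Theorem \ref{theorem:pyramid}. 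For $x=w_1$ and $Tx\in G_j$, using the explicit formula $f_i(x,y,z)=\frac{\cos\theta_i\,x+\sin\theta_i\,y}{2\cos(\pi/2n)}+\frac{z}{2}$ with $\theta_i=(2i-1)\pi/2n$, together with the defining plane $\cos\theta_j\,x+\sin\theta_j\,y=\cos\frac{\pi}{2n}$ of $G_j$, I obtain $f_j(Tx)=\tfrac12+\tfrac{z}{2}$ and $f_{j+n}(Tx)=-\tfrac12+\tfrac{z}{2}$, and the sign of the $z$-coordinate forces one of these to have absolute value at least $\tfrac12$, which is $\geq\sin\frac{\pi}{2n}$ (and $\geq\tan\frac{\pi}{2n}$) since $n\geq 3$. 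For $x=v_1$ and $Tx\in\pm F_j$, I would parametrize $Tx=\lambda v_j+\mu v_{j+1}+(1-\lambda-\mu)w_{\pm 1}$, express $g_1,g_{2n},f_1,f_{2n}$ as affine functions of $(\lambda,\mu)$, and split on the size of $\lambda+\mu$ as in Case~4 of Theorem \ref{theorem:pyramid}.

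For Step~2, I take $T(x,y,z)=(-y\cos\frac{\pi}{2n},\,x\cos\frac{\pi}{2n},\,z\sin\frac{\pi}{2n})$ for odd $n$ -- precisely the optimal operator from Step~2 of Theorem \ref{theorem:odd radius} with $l=0$. Norm one follows by evaluation on extreme points: $\pm v_j$ lands on the side facet $G_{(n+2j-1)/2}$ as in the prism case, while $Tw_{\pm 1}=(0,0,\pm 2\sin\frac{\pi}{2n})$ sits on the axial segment joining $w_{-1}$ and $w_1$ and hence in $B_{\mathbb{X}}$. Direct computation then yields $|g_{j-1}(Tv_j)|=|g_j(Tv_j)|=|f_j(Tv_j)|=\sin\frac{\pi}{2n}$, $f_{j-1}(Tv_j)=0$, and $f_i(Tw_1)=\sin\frac{\pi}{2n}$ for every pyramid functional $f_i$; combined with Theorem \ref{theorem:extreme}, this gives $v(T)=\sin\frac{\pi}{2n}$ and hence equality. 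The even-$n$ case proceeds in parallel with the optimal operator implicit in Theorem \ref{theorem:even}. I expect the principal obstacle to be the mixed case $x=v_1$ with $Tx$ on a pyramid facet $F_j$ non-adjacent to $v_1$: no supporting functional at $v_1$ attains norm at $Tx$, so I must carefully exploit the linear dependence on $(\lambda,\mu)$ across the several distinct cyclic positions of $F_j$ relative to $v_1$, a noticeably more intricate bookkeeping than the $n=2$ case of Theorem \ref{theorem:pyramid}.
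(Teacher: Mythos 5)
Your plan is essentially the route the paper itself intends: the paper dismisses this theorem with the single sentence ``Using similar arguments, the following result can be proved,'' and your proposal is a faithful (and more explicit) elaboration of those arguments. The parts you actually execute check out: the inventory of the $6n$ facets and their supporting functionals, the observation that the side-facet functionals $g_j$ are literally the same as in the pure prism (their defining planes do not involve $z$), so the relevant cases of Theorems~\ref{theorem:odd radius} and \ref{theorem:even} transfer for $x=v_1$, $Tx\in\pm G_j$; the computation $f_j(Tx)=\tfrac12+\tfrac{z}{2}$, $f_{j+n}(Tx)=-\tfrac12+\tfrac{z}{2}$ when $x=w_1$, $Tx\in G_j$; and the Step~2 operator for odd $n$, for which one indeed gets $|g_{j-1}(Tv_j)|=|g_j(Tv_j)|=|f_j(Tv_j)|=f_i(Tw_1)=\sin\frac{\pi}{2n}$ and $f_{j-1}(Tv_j)=0$.

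The one genuine hole is exactly the case you flag and then defer: $x=v_1$ with $Tx$ on a pyramid facet $\pm F_k$ not containing $v_1$. This is the only configuration with no counterpart in Theorems~\ref{theorem:odd radius} and \ref{theorem:pyramid}, so leaving it as ``intricate bookkeeping'' leaves the proof incomplete. It does go through, and for most $n$ there is a shortcut worth recording: on every pyramid facet one has $z\ge 1$, while $f_1-\tfrac12 g_1=\tfrac{z}{2}$, so $|f_1|+\tfrac12|g_1|\ge\tfrac12$; hence $|f_1|<c$ and $|g_1|<c$ force $c>\tfrac13$, which is impossible for all $n\ge 5$ since then $c\le\sin\frac{\pi}{10}<\tfrac13$. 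Thus only $n=3$ and $n=4$ need the explicit $(\lambda,\mu)$ analysis, and these are finite checks (for $n=3$ one finds, e.g., on $F_3=\mathrm{conv}\{v_3,v_4,w_1\}$ that $g_{2n}(p)=-(\lambda+\mu)$ and $f_{2n}(p)=1-(\lambda+\mu)$, so one of them has modulus at least $\tfrac12$; for $n=4$ the sharp case rests on the identity $\tfrac{1-c^2}{2}=c$ for $c=\tan\frac{\pi}{8}$). Finally, Step~2 for even $n$ is asserted rather than proved: since the paper never displays the extremal operator of Theorem~\ref{theorem:even} either, you must actually write down a norm-one $T$ with $v(T)=\tan\frac{\pi}{2n}$ and verify it against the pyramid functionals $f_{\pm j}$ as well, not only against the $g_j$.
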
	

We would like to end the present article with the observation that when $ n \geq 3, $ Theorem \ref{theorem:prism pyramid} gives the same result as the corresponding 2-dimensional case in Theorem $ 5 $ of \cite{MJ}. This is consistent with the fact that non-isometric Banach spaces may have the same numerical index. However, we have particularly described the case $ n=2 $ in Theorem \ref{theorem:pyramid}, as the numerical index of the concerned space turns out to be different from the case $ n=2 $ in Theorem $ 5 $ of \cite{MJ}.

\bibliographystyle{amsplain}

\end{document}